\documentclass[12pt,amssymb,verbatim]{amsart}

\usepackage[all]{xy}
\usepackage{amssymb}
\usepackage{amsthm}

\def\cal{\mathcal}

\numberwithin{equation}{section}

\theoremstyle{plain}
\newtheorem{theorem}{Theorem}
\newtheorem*{theorem*}{Theorem}

\newtheorem{lemma}[theorem]{Lemma}

\newtheorem{corollary}[theorem]{Corollary}

\theoremstyle{definition}

\theoremstyle{remark}
\newtheorem{remark}[theorem]{Remark}

\newtheorem{example}[theorem]{Example}

 \def\today{\ifcase\month\or
  January\or February\or March\or April\or May\or June\or
  July\or August\or September\or October\or November\or December\fi
  \space\number\day, \number\year}

\begin{document}
\title[Genus of quaternion algebras]
{Genus of quaternion algebras over \\ formally real fields}
\author[S.V. Tikhonov]{Sergey V. Tikhonov}

\thanks{The research was conducted under the 2026 grant from the President of the Republic of Belarus in the field of science. }

\address{
Belarusian State University, Nezavisimosti Ave., 4,
220030, Minsk, Belarus} \email{tikhonovsv@bsu.by }

\def\cA{{\cal A}}
\def\cB{{\cal B}}
\def\cC{{\cal C}}
\def\cD{{\cal D}}
\def\cV{{\cal V}}
\def\cE{{\cal E}}
\def\cM{{\cal M}}
\def\cR{{\cal R}}
\def\cQ{{\cal Q}}
\def\M{{\rm M}}
\def\cS{{\cal S}}
\def\Symb{\textrm{Symb}}
\def\Gal{\textrm{Gal}}
\def\U{\textrm{U}}
\def\SU{\textrm{SU}}
\def\R{\textrm{R}}
\def\GL{\textrm{GL}}

\def\U{{\rm U}}
\def\SU{{\rm SU}}
\def\SL{{\rm SL}}
\def\op{{\rm op\;}}


\def\cor{\textrm{cor}}
\def\deg{\textrm{deg}}
\def\exp{\textrm{exp}}
\def\Gal{\textrm{Gal}}
\def\ram{\textrm{ram}}
\def\Spec{\textrm{Spec}}
\def\Proj{\textrm{Proj}}
\def\Perm{\textrm{Perm}}
\def\coker{\textrm{coker\,}}
\def\Hom{\textrm{Hom}}
\def\im{\textrm{im\,}}
\def\ind{\textrm{ind}}
\def\int{\textrm{int}}
\def\inv{\textrm{inv}}
\def\min{\textrm{min}}
\def\Br{\textrm{Br}}
\def\char{\textrm{char}}

\def\w{\widehat}

\begin{abstract}

We prove the finiteness of the genus of quaternion division algebras over some formally real fields.
In particular, if $F$ is a formally real field and the second power of the fundamental ideal $I^2(F)$ is torsion-free, then the genus of any quaternion division $F$-algebra is trivial.
For instance, this immediately implies that the genus vanishes for any quaternion division algebra over a formally real pythagorean field.
The proofs are based on methods from the theory of quadratic forms.

\end{abstract}

\maketitle


\def\dd{{\partial}}

\def\into{{\hookrightarrow}}

\def\emptyset{{\varnothing}}

\def\alp{{\alpha}}  \def\bet{{\beta}} \def\gam{{\gamma}}
 \def\del{{\delta}}
\def\eps{{\varepsilon}}
\def\kap{{\kappa}}                   \def\Chi{\text{X}}
\def\lam{{\lambda}}
 \def\sig{{\sigma}}  \def\vphi{{\varphi}} \def\om{{\omega}}
\def\Gam{{\Gamma}}  \def\Del{{\Delta}}  \def\Sig{{\Sigma}}
\def\ups{{\upsilon}}


\def\A{{\mathbb A}}
\def\F{{\mathbb F}}
\def\Q{{\mathbb{Q}}}
\def\CC{{\mathbb{C}}}
\def\PP{{\mathbb P}}
\def\R{{\mathbb R}}
\def\Z{{\mathbb Z}}
\def\X{{\mathbb X}}
\def\R{{\mathbb R}}

\def\Gm{{{\Bbb G}_m}}
\def\Gmk{{{\Bbb G}_{m,k}}}
\def\GmL{{\Bbb G_{{\rm m},L}}}
\def\Ga{{{\Bbb G}_a}}

\def\Fb{{\overline F}}
\def\Hb{{\overline H}}
\def\Kb{{\overline K}}
\def\Lb{{\overline L}}
\def\Yb{{\overline Y}}
\def\Xb{{\overline X}}
\def\Tb{{\overline T}}
\def\Bb{{\overline B}}
\def\Gb{{\overline G}}
\def\Vb{{\overline V}}

\def\kb{{\bar k}}
\def\xb{{\bar x}}

\def\Th{{\hat T}}
\def\Bh{{\hat B}}
\def\Gh{{\hat G}}

\def\Xt{{\tilde X}}
\def\Gt{{\tilde G}}

\def\gm{{\mathfrak m}}
\def\gp{{\mathfrak p}}
\def\gq{{\mathfrak q}}

\def\textrm#1{\text{\rm #1}}

\def\res{\textrm{res}}
\def\cor{\textrm{cor}}
\def\ed{\textrm{ed}}

\def\tors{_{\textrm{tors}}}      \def\tor{^{\textrm{tor}}}
\def\red{^{\textrm{red}}}         \def\nt{^{\textrm{ssu}}}
\def\sc{^{\textrm{sc}}}
\def\sss{^{\textrm{ss}}}          \def\uu{^{\textrm{u}}}
\def\ad{^{\textrm{ad}}}           \def\mm{^{\textrm{m}}}
\def\tm{^\times}                  \def\mult{^{\textrm{mult}}}
\def\tt{^{\textrm{t}}}
\def\uss{^{\textrm{ssu}}}         \def\ssu{^{\textrm{ssu}}}
\def\cf{^{\textrm{cf}}}
\def\ab{_{\textrm{ab}}}

\def\et{_{\textrm{\'et}}}
\def\nr{_{\textrm{nr}}}

\def\SB{\textrm{SB}}

\def\<{\langle}
\def\>{\rangle}

\def\til{\;\widetilde{}\;}


\font\cyr=wncyr10 scaled \magstep1%
\def\Bcyr{\text{\cyr B}}
\def\Sh{\text{\cyr Sh}}
\def\Ch{\text{\cyr Ch}}

\def\lpsi{{{}_\psi}}
\def\bks{{\backslash}}

\def\Br{\textrm{Br}}
\def\Pic{\textrm{Pic}}
\def\Bt{{{}_2\textrm{Br}}}
\def\Bn{{{}_n\textrm{Br}}}
\def\Grass{\textrm{Grass}}




\section{Introduction}

Let $F$ be a field and $\Br(F)$ its Brauer group.
The genus $\mathbf{gen}(\cD)$ of a finite-dimensional central division algebra $\cD$ over a field $F$ is defined as the set of classes $[\cD']\in \Br(F)$,
where $\cD'$ is a central division $F$-algebra having the same maximal subfields as $\cD$.
Various modifications of the concept of genus are discussed in \cite{ChRaRa15} and \cite{KrMaRaRoSa22}.
Results concerning the genus of algebras and algebraic groups are presented in the surveys \cite{RaRa20} and
\cite{Ra24}.

When $F$ is a global number field, the Albert-Brauer-Hasse-Noether theorem (\cite[\S 18.4]{Pi82}) guarantees that the genus of any central division $F$-algebra is finite. Moreover, the genus of any quaternion division algebra over $F$ is trivial (i.e., $|\mathbf{gen}(\cD)|=1$) (see \cite[Proposition 3.1]{ChRaRa15}).

A systematic framework for establishing the finiteness of $\mathbf{gen}(\cD)$ and bounding its cardinality was introduced in \cite{ChRaRa13}; this method relies on analyzing the unramified Brauer group relative to a suitable family of discrete valuations on the field. Subsequently, it was demonstrated in \cite{ChRaRa16} and \cite{ChRaRa20} that every central division algebra over a finitely generated field has a finite genus.
The question remains open whether there is a quaternion division algebra over a finitely generated field of characteristic $\ne 2$ having nontrivial genus (\cite[Question 8.8]{RaRa20}).

The behavior of the genus for division algebras over discrete valued fields was investigated in \cite{Sr25}. Specifically, for the case of quaternion algebras, the genus was shown to be trivial over a wide class of fields, including higher local fields, function fields of curves defined over higher local fields, and function fields of curves over real closed fields.

According to \cite{RaRa10}, if the genus of every central quaternion division algebra $\cD$ over a field $F$ ($\char(F) \ne 2$) is trivial, this property passes to central quaternion division algebras over the rational function field $F(x)$. This finding was later extended to central division algebras of exponent 2 in \cite{ChRaRa13}. However, it is not yet known whether there is
a quaternion division algebra over the field of rational functions
$F(x)$ over some field $F$ having nontrivial genus (\cite[Question 8.6]{ChRaRa24}).

For arbitrary fields, the genus of an algebra may well be infinite. In \cite{GaSa10}, an example of a quaternion algebra $\cD$ over a large center was constructed via an iterative composition of function fields, yielding a nontrivial genus. Furthermore, \cite{Me14} established the existence of quaternion division algebras with infinite genus over fields of infinite transcendence degree over their prime subfields. These results were later generalized to division algebras of an arbitrary prime degree in \cite{Ti16}. More recently, in \cite[Theorem 5.2]{KrLi24}, the authors provided examples of infinite families of division algebras over a common field that share the same finite splitting fields, as well as splitting fields arising as function fields of curves of genus at most 1.
In \cite{Ti25}, the genus of finite-dimensional division algebras was shown to be finite over a broad class of infinitely generated fields.

In this paper, we prove the finiteness of the genus of quaternion division algebras over some formally real fields.
The proofs are based on methods from the theory of quadratic forms, in particular employing Pfister's local-global principle for quadratic forms over formally real fields.
The main results are the following. The necessary notation is given below.

\begin{theorem*}[Theorem \ref{th: I2(F)}]
Let $F$ be a formally real field such that $I_t^2(F)$ is finite. Let $n := |I_t^2(F)|$.
Let $\cA=(a,b)$ be a quaternion division $F$-algebra. Then the number of quaternion division $F$-algebras
$\cB=(c,d)$ such that $\cA$ and $\cB$ satisfy condition~($*$) does not exceed $n$.
In particular, $|\mathbf{gen}(\cA)| \le n$.
\end{theorem*}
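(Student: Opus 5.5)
The plan is to attach to each quaternion algebra its norm form, a $2$-fold Pfister form, and to turn condition~($*$) into an equality of signatures at every ordering of $F$. Pfister's local-global principle then places the difference of two such forms in $I_t^2(F)$, and this gives an injection of the set of admissible $\cB$ into $I_t^2(F)$. I take condition~($*$) to mean, for quaternion algebras, that $\cA$ and $\cB$ contain the same quadratic extensions $F(\sqrt{x})$. This holds in particular when they have the same maximal subfields, so every class in $\mathbf{gen}(\cA)$ satisfies ($*$) and the last sentence of the theorem follows from the count.

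\emph{Step 1 (same orderings where the algebras are nonsplit).} We have $F(\sqrt{x})\subset(a,b)$ exactly when $x$ is represented by the pure quaternion form $\varphi_\cA=\langle a,b,-ab\rangle$. Fix an ordering $P$ of $F$. The algebra $\cA\otimes F_P$ is nonsplit iff $a,b<_P0$, which is equivalent to $\varphi_\cA$ being negative definite at $P$.

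Suppose $\cB$ is nonsplit at $P$ but $\cA$ is split at $P$. Then $\varphi_\cA$ is indefinite at $P$, so one of its diagonal coefficients $x\in\{a,b,-ab\}$ satisfies $x>_P0$. Hence $F(\sqrt{x})\subset\cA$. On the other hand, every value of $\varphi_\cB$ is negative at $P$, so $F(\sqrt{x})\not\subset\cB$, which contradicts ($*$). By symmetry, $\cA$ and $\cB$ are nonsplit at exactly the same orderings. Therefore the Pfister forms $\langle\langle a,b\rangle\rangle$ and $\langle\langle c,d\rangle\rangle$ have the same signature at every $P$: it is $4$ where the algebra is nonsplit and $0$ where it is split.

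\emph{Step 2 (landing in the torsion).} Put $q_\cB=\langle\langle c,d\rangle\rangle-\langle\langle a,b\rangle\rangle\in W(F)$. Since both terms are in $I^2(F)$, so is $q_\cB$, and by Step~1 it has total signature zero. Pfister's local-global principle says that the kernel of the total signature map is the torsion of $W(F)$. Hence $q_\cB\in I^2(F)\cap W_t(F)=I_t^2(F)$.

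\emph{Step 3 (injectivity).} If $q_\cB=q_{\cB'}$, then $\langle\langle c,d\rangle\rangle=\langle\langle c',d'\rangle\rangle$ in $W(F)$. Both forms are anisotropic of the same dimension, since $\cB$ and $\cB'$ are division algebras, so Witt cancellation gives that they are isometric. A quaternion algebra is determined up to isomorphism by its norm form, so $\cB\cong\cB'$. Thus $\cB\mapsto q_\cB$ is an injection into $I_t^2(F)$, and the number of such $\cB$ is at most $n$.

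I expect Step~1 to be the main obstacle: one has to extract from ($*$) that the two algebras ramify at exactly the same orderings, and this depends on choosing the quadratic extension $F(\sqrt{x})$ correctly. Steps~2 and~3 are standard.
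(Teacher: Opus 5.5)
\textbf{Gap: condition~($*$) is misread.} Condition~($*$) does not say that $\cA$ and $\cB$ have the same quadratic subfields. It says only that $F(\sqrt c)$ and $F(\sqrt d)$ split $\cA$, and that $F(\sqrt a)$ and $F(\sqrt b)$ split $\cB$. Equivalently, $c,d\in D(\langle a,b,-ab\rangle)$ and $a,b\in D(\langle c,d,-cd\rangle)$.

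In general this is strictly weaker than having the same quadratic subfields. Over $\mathbb{Q}$, the algebras $(-1,-1)$ and $(-1,-3)$ satisfy ($*$), using $-1=\omega^2+(\omega^2)^2$ in $\mathbb{Q}(\sqrt{-3})$. Yet $\mathbb{Q}(\sqrt{-2})$ embeds in the first algebra and not in the second.

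So, as written, you only count a possibly smaller set of algebras. That yields $|\mathbf{gen}(\cA)|\le n$, but not the main assertion of the theorem. The concrete failure is in Step~1: the positive coefficient you pick may be $x=-ab$, and ($*$) gives no information about $F(\sqrt{-ab})$.

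\textbf{The repair is easy.} If $\cA_{F_P}$ is split, then already $a>_P0$ or $b>_P0$. Condition ($*$) gives $a,b\in D(\langle c,d,-cd\rangle)$. This is impossible when $\cB_{F_P}$ is nonsplit, because that form is then negative definite at $P$. The other direction works the same way with $c,d$ and $\langle a,b,-ab\rangle$. This is exactly the paper's Lemma~\ref{lem: FP}.

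\textbf{Steps~2 and~3 are correct but follow a different route.} Once Step~1 is repaired, the rest of your argument is sound.
\begin{itemize}
\item The paper writes $\cB\sim\cA\otimes_F\cC$ with $\cC$ a quaternion algebra. This tacitly uses that $\cA$ and $\cB$ share the quadratic subfield $F(\sqrt c)$. It then observes that $\cC$ splits over every $F_P$, and counts torsion $2$-fold Pfister forms $n_{\cC}$.
\item Your map $\cB\mapsto\langle\langle c,d\rangle\rangle-\langle\langle a,b\rangle\rangle\in I_t^2(F)$ needs no linkage, and its injectivity is just Witt cancellation.
\end{itemize}
Your route therefore proves slightly more: at most $n$ quaternion division algebras are nonsplit at exactly the same orderings as $\cA$. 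The paper's formulation has a different advantage. It produces a single torsion Pfister form $n_{\cC}$, which the paper reuses later, for example in Corollary~\ref{cor: I3(F)} and in the residue argument of Theorem~\ref{th: variety}.
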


As a corollary, we obtain that the genus of any quaternion division algebra over a formally real pythagorean field is trivial.

\begin{theorem*}[Theorem \ref{th: variety}]
Let $F$ be a formally real field which is a finitely generated extension of some subfield $k$.
Assume that the torsion of $I_{nr}^2(F/k)$ is finite. Let $n$ be the number of torsion elements in $I_{nr}^2(F/k)$. Then
$$
|\mathbf{gen}(\cA)| \le n
$$
for any quaternion division $F$-algebra $\cA$.
\end{theorem*}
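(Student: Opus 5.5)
The plan is to embed $\mathbf{gen}(\cA)$ into the torsion part of $I_{nr}^2(F/k)$ by the map $[\cB]\mapsto [\cA]+[\cB]$. I identify $I^2(F)/I^3(F)$ with ${}_2\Br(F)$ via Merkurjev's theorem, with $[\cA]+[\cB]$ corresponding to the class of $q_{\cB}:=\langle\langle a,b\rangle\rangle\perp -\langle\langle c,d\rangle\rangle$ for $\cB=(c,d)$. I take $I^2_{nr}(F/k)$ to be the subgroup of classes unramified at every discrete valuation of $F$ trivial on $k$; I have not yet seen the paper's definition, and this is an assumption to be checked. The map is injective, since $[\cB]$ is recovered from $[\cA]+[\cB]$. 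It therefore suffices to show that for every $\cB\in\mathbf{gen}(\cA)$ the class of $q_\cB$ is (i) unramified over $k$ and (ii) torsion. Then $|\mathbf{gen}(\cA)|\le n$.

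For (ii) I would use Pfister's local-global principle: a form over a formally real field is torsion in $W(F)$ iff its signature vanishes at every ordering. So it is enough to show that $\cA$ and $\cB$ split at exactly the same orderings $P$ of $F$. Suppose $\cA\otimes F_P$ is nonsplit while $\cB\otimes F_P$ is split. Then $a,b<_P0$, so the pure quaternion norm form $\langle -a,-b,ab\rangle$ of $\cA$ is negative definite at $P$. Meanwhile $\langle -c,-d,cd\rangle$ is indefinite at $P$, hence represents some $e\in F^\times$ with $e>_P0$. The quadratic field $F(\sqrt{e})$ (with $e$ a nonsquare, which I can arrange by choosing the represented value generically) then embeds in $\cB$ but not in $\cA$. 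This contradicts the equality of maximal subfields. The roles of $\cA$ and $\cB$ are symmetric, so the two algebras are split at exactly the same orderings. Consequently both $\langle\langle a,b\rangle\rangle$ and $\langle\langle c,d\rangle\rangle$ have signature $4$ or $0$ at each ordering simultaneously, $q_\cB$ has total signature zero, and some $2^m q_\cB$ is hyperbolic. This is the same argument that should underlie condition~($*$) and Theorem~\ref{th: I2(F)}, so I would invoke that earlier argument directly where possible.

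For (i), let $v$ be a discrete valuation of $F$ trivial on $k$. I would use the Chernousov--Rapinchuk--Rapinchuk observation that algebras with the same maximal subfields have residues generating the same subgroup of the character group of the residue field. If the residue field has characteristic $2$, the residue of a $2$-torsion class is not a quadratic character, so this part needs separate treatment. If its characteristic is not $2$, the residues $\partial_v(\cA)$ and $\partial_v(\cB)$ are quadratic characters, and equal generated subgroups of order at most $2$ force $\partial_v(\cA)=\partial_v(\cB)$. Hence $\partial_v([\cA]+[\cB])=0$.

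The main obstacle I expect is matching the paper's precise definition of $I^2_{nr}(F/k)$ and of its torsion. In particular I must check that "torsion" is meant for the Witt-class representative, and that the torsion form $q_\cB$ yields a torsion element of $I^2_{nr}(F/k)$ rather than merely a torsion Brauer class. I would also handle valuations with residue characteristic $2$, which would be excluded if $\char k=0$ (automatic when $k$ is formally real). If the residue argument is needed in general, I would try reducing to the completion $F_v$, where the Springer-type decomposition of quaternion algebras makes it concrete, and compare quadratic subfields there.
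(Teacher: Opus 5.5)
\paragraph{Gap in step (i).}
Your step (ii) is fine: it is the argument of Lemma~\ref{lem: FP}, up to a sign slip. For $a,b<_P0$ it is $-n'_{\cA}=\langle a,b,-ab\rangle$ that is negative definite, and $F(\sqrt e)\subset\cB$ iff $e\in D(\langle c,d,-cd\rangle)$. Also, $e$ is automatically a nonsquare because $\cB$ is division.

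The gap is in step (i), at the point you flagged yourself. In the paper, $I^2_{nr}(F/k)=W_{nr}(F/k)\cap I^2(F)$, where $W_{nr}(F/k)$ is the kernel of the second residue maps $\partial_{\pi_v}\colon W(F)\to W(k(v))$, $v\in V_F$. This is a condition on the Witt class itself, not on its image in $I^2(F)/I^3(F)\cong{}_2\Br(F)$. What you prove, $\rho_v([\cA])=\rho_v([\cB])$, controls $\partial_{\pi_v}(q_\cB)$ only modulo $I^2(k(v))$, not its vanishing.

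For example, suppose that over the completion $F_v$ one has $\cA\otimes_F F_v\cong(\pi u,w)$ and $\cB\otimes_F F_v\cong(\pi u',w)$ with units $u,u',w$. Both tame symbols equal $\bar w$, yet
\[
\partial_{\pi_v}(q_\cB)=\langle -\bar u\rangle\langle 1,-\bar w\rangle-\langle -\bar u'\rangle\langle 1,-\bar w\rangle=\langle -\bar u\rangle\langle\langle \bar u\bar u',\bar w\rangle\rangle ,
\]
which is nonzero unless $\cA\otimes_F\cB$ splits over $F_v$. The structural reason is this. Since $\cA$ and $\cB$ share a quadratic subfield, write $\cA=(e,x)$ and $\cB=(e,y)$. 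Then $q_\cB=\langle -x\rangle\, n_{\cC}$ in $W(F)$, with $\cC=(e,xy)\sim\cA\otimes_F\cB$. But $W_{nr}(F/k)$ is not stable under multiplication by $\langle x\rangle$ when $v(x)$ is odd. So your Brauer-level residue computation does not place your chosen representative $q_\cB$ in $I^2_{nr}(F/k)$.

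\paragraph{The fix.}
The missing idea is to pick the right Witt representative: send $[\cB]$ to the $2$-fold Pfister form $n_\cC$ rather than to $q_\cB$, as the paper does. The needed facts are as follows.
\begin{itemize}
\item For a single quaternion algebra $\cC=(x,y)$, a case check on the parities of $v(x)$ and $v(y)$ against the tame symbol shows that $\rho_v([\cC])=1$ forces $\partial_{\pi_v}(n_\cC)=0$.
\item $\rho_v([\cC])=\rho_v([\cA])\rho_v([\cB])=1$ by \cite[Lemma 2.5]{ChRaRa13}.
\item $n_\cC$ is torsion, because $\cC$ splits at every ordering (your step (ii)).
\item $n_\cC\in W(F)$ determines $\cC$ up to isomorphism, hence determines $[\cB]=[\cA]+[\cC]$, so the map is injective.
\end{itemize}

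If you want to keep $q_\cB$, you need a stronger local fact: $\cA\otimes_F F_v\cong\cB\otimes_F F_v$ whenever $\cA$ is ramified at $v$. This can be proved by approximating the subfield $F_v(\sqrt{\pi u})$ of $\cA\otimes_F F_v$ by a global quadratic subfield of $\cA$, and then comparing ramified quadratic subfields locally. It is true, but more work than needed.

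Finally, your residue-characteristic-$2$ concern is vacuous. Since $F$ is formally real, $\mathrm{char}\,F=0$, and every residue field of $v\in V_F$ contains $k$.
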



\section{Notation and Preliminary Results}

We first recall some basic notions from the theory of quadratic forms and quaternion algebras.
For a field $F$ of characteristic not 2 and non-zero elements $a, b \in F$, $(a, b)_F$ denotes the corresponding quaternion algebra.
We omit the subscript $F$ if the base field is clear from the context.
The norm form of a quaternion $F$-algebra $\mathcal{A}=(a, b)_F$ is the 2-fold Pfister form $n_{\mathcal{A}} :=\langle \langle a, b \rangle \rangle = \langle 1, -a, -b, ab \rangle$. Quaternion $F$-algebras $\mathcal{A}$ and $\mathcal{B}$ are isomorphic if and only if their corresponding norm forms $n_{\mathcal{A}}$ and $n_{\mathcal{B}}$ are isometric. The pure subform of $n_{\mathcal{A}}$, denoted by $n_{\mathcal{A}}'$, is the 3-dimensional form $\langle -a, -b, ab \rangle$.
For a quadratic form $\phi$ over $F$, $D_F(\phi)=D(\phi)$ denotes the set of values $d\in F^*$ represented by $\phi$.
A quadratic extension $F(\sqrt d)$ for $d \in F^* \setminus {F^*}^2$
splits $\mathcal{A}=(a,b)$ if and only if $d \in D(-n_{\mathcal{A}}')$.
For a field extension $K/F$ and a central simple $F$-algebra $\mathcal{A}$, $\mathcal{A}_K$ denotes the tensor product $\mathcal{A}\otimes_F K$.
We will write $\cA \sim \cB$ to refer to the fact
that the central simple algebras $\cA$ and $\cB$ are similar.

We say that quaternion division $F$-algebras $\mathcal{A}=(a,b)$ and $\mathcal{B}=(c,d)$ satisfy condition~$(*)$ if:
\begin{equation}
\label{cond:star}
\tag{$*$}
\begin{minipage}[t]{0.85\textwidth}
the algebra $\mathcal{A}=(a,b)$ is split by the fields $F(\sqrt{c})$ and $F(\sqrt{d})$, and
the algebra $\mathcal{B}=(c,d)$ is split by the fields $F(\sqrt{a})$ and $F(\sqrt{b})$.
\end{minipage}
\end{equation}
Note that if $\mathbf{gen}(\mathcal{A}) = \mathbf{gen}(\mathcal{B})$, then the algebras $\mathcal{A}$ and $\mathcal{B}$ clearly satisfy condition~$(*)$.

Let $F$ be a formally real field (i.e., $-1$ is not a sum of squares in $F$).
We denote by $W(F)$ the Witt ring of $F$. Then $I^n(F)$ denotes the $n$-th power of the fundamental ideal $I(F)$.
Moreover, $W_t(F)$ is the additive torsion in $W(F)$ and $I_t^n(F) := I^n(F) \cap W_t(F)$.
For a formally real field $F$, let $X(F)$ be the space of orderings. For $P \in X(F)$, $F_P$ denotes the real closure of $F$ at $P$.
If a field $F$ is real closed, there is, up to an $F$-algebra isomorphism, only one quaternion division $F$-algebra, which is the algebra $(-1,-1)_F$.

Pfister's local--global principle (see, e.g., \cite[Theorem 31.22]{ElKaMe08}) states that the sequence
$$
0 \longrightarrow W_t(F) \longrightarrow W(F) \longrightarrow \prod_{P \in X(F)}  W(F_P)
$$
is exact.

A pythagorean field is a field in which every sum of two squares is a square.
A formally real field is called hereditarily pythagorean if every formally real algebraic extension of it is pythagorean.
A euclidean field is an ordered field for which every non-negative element is a square.
In particular, any euclidean field is pythagorean.
A formally real field $F$ is called hereditarily euclidean if every finite real field extension of $F$ is euclidean.
A field that is both hereditarily pythagorean and euclidean is already hereditarily euclidean.
A field $F$ satisfies property $A_2$ if there are no anisotropic $2$-fold Pfister forms of finite order in $W(F)$.
Recall that the Pythagoras number $p(F)$ of a field $F$ is the smallest integer $n$ such that every element that can be expressed as a sum of
squares in $F$ can be expressed as a sum of at most $n$ squares.

The following lemma will be needed.

%

\begin{lemma} \label{lem:D(n)}
Let $F$ be a formally real field for which $I^2(F)$ is torsion-free.
For two $2$-fold Pfister forms $\sigma$ and $\phi$ over $F$, the following statements are
equivalent:
$$
(1) \; \sigma \cong \phi,   \;\;\;  (2) \; D(\sigma')=D (\phi'),  \;\;\;  (3)  \; D(\sigma)= D(\phi).
$$
\end{lemma}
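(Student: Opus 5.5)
Our plan is to prove $(1)\Rightarrow(2)\Rightarrow(3)\Rightarrow(1)$. The implication $(1)\Rightarrow(2)$ is standard: an isometry of Pfister forms $\sigma\cong\phi$ induces an isometry of pure subforms, since by Witt cancellation $\sigma=\langle1\rangle\perp\sigma'$ and $\phi=\langle1\rangle\perp\phi'$ give $\sigma'\cong\phi'$.

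For $(2)\Rightarrow(3)$ we use the classical description of the values of a Pfister form through its pure part. The value set $D(\sigma)$ consists of the nonzero elements of the form $x^2+y$ with $y\in D(\sigma')\cup\{0\}$. When $\sigma'$ is isotropic, $\sigma$ is hyperbolic and $\sigma'$ is universal, so both sets are $F^*$. If $D(\sigma')=D(\phi')$, the sets $\{x^2+y\}$ coincide. The only subtlety is the case $x^2+y=0$, which is excluded because we take nonzero values. Hence $D(\sigma)=D(\phi)$.

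The main step is $(3)\Rightarrow(1)$, and this is where the torsion-freeness of $I^2(F)$ enters. Suppose $D(\sigma)=D(\phi)$.

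First we reduce to the anisotropic case. A Pfister form is isotropic if and only if it is hyperbolic, if and only if it is universal. So if one of the two forms is isotropic, then $D=F^*$ for both. An anisotropic $2$-fold form over a formally real field could still be universal, so we cannot conclude directly. We handle this case inside the main argument rather than separately.

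Next we compare the forms at each ordering. At an ordering $P$, a $2$-fold Pfister form over $F_P$ is either hyperbolic or $4\langle1\rangle$. It is $4\langle1\rangle$ exactly when $\sigma$ is positive definite at $P$, that is, when $\sigma=\langle\langle a,b\rangle\rangle$ with $a,b<_P0$. In that case every value of $\sigma$ is $P$-positive. In the other (indefinite) case, $\sigma$ represents an element negative at $P$.

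We want to show that the signatures of $\sigma$ and $\phi$ agree at every $P$. The difficulty is that a single negative value at $P$ might not distinguish anything. So we argue as follows. Suppose $\sigma$ is positive definite at $P$ and $\phi$ is not. Then $\phi\otimes F_P$ is hyperbolic, and the form $\langle a,b\rangle$ in $\phi'$ is indefinite at $P$ (up to sign). Write $\phi=\langle\langle c,d\rangle\rangle$ with, say, $c>_P0$. Then $\phi$ represents $-c\cdot t$ for a suitable $t$. More simply, $\phi$ contains the subform $\langle1,-c\rangle$, and $\langle 1,-c\rangle$ represents $1-c\,x^2$. Choosing $x\in F$ with $cx^2>_P1$, which is possible by density of $F$ in $F_P$, gives a $P$-negative value of $\phi$. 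That value lies in $D(\sigma)$, which consists of $P$-positive elements, a contradiction. Thus $\sigma$ and $\phi$ have the same signature at every $P\in X(F)$.

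Then $\sigma-\phi\in I^2(F)$ has zero total signature, so by Pfister's local--global principle it lies in $W_t(F)\cap I^2(F)=I^2_t(F)=0$. Therefore $\sigma=\phi$ in $W(F)$, and since both have dimension $4$, $\sigma\cong\phi$. This argument also covers the isotropic and universal cases automatically.

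The expected obstacle is the choice of $x$ producing a $P$-negative value. Density of $F$ in its real closure, i.e. the fact that $P$ is archimedean relative to $F$'s own elements, ensures that some square exceeds $1/c$ in $P$, namely $x=1+1/c$ squared suitably. We will check this elementary inequality carefully.
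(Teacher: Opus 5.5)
Your proof is correct. It takes a genuinely different route from the paper, which gives no argument of its own and only cites \cite[Proposition 2.2]{ElLa72}.

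Your implications (1)$\Rightarrow$(2)$\Rightarrow$(3) are routine: Witt cancellation, and $D(\sigma)=\{x^2+y\neq 0 : y\in D(\sigma')\cup\{0\}\}$. Your (3)$\Rightarrow$(1) exposes the actual mechanism. The value set $D(\sigma)$ determines the set of orderings $P$ with $D(\sigma)\subseteq P$, which are exactly those at which $\sigma$ is positive definite. So $D(\sigma)$ determines the total signature of $\sigma$: it is $4$ on that set and $0$ elsewhere. Pfister's local--global principle says the kernel of the total signature on $I^2(F)$ is $I^2_t(F)$. Hence the hypothesis is precisely what makes the signature determine the class in $I^2(F)$.

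What each approach buys:
\begin{itemize}
\item The citation is shorter.
\item Your argument uses only tools the paper already relies on: the sign analysis of Lemma \ref{lem: FP} and the local--global principle in Theorem \ref{th: I2(F)}.
\item Your argument works verbatim for $n$-fold Pfister forms when $I^n(F)$ is torsion-free.
\item It also shows that the paper's ``alternative'' proof of Corollary \ref{cor: I2(F)}, once the citation is unwound, is essentially the same signature argument as the first proof.
\end{itemize}

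One correction concerns the step producing a $P$-negative value of $\phi=\langle\langle c,d\rangle\rangle$ with $c>_P 0$. You do not need to choose any $x$, since $\phi=\langle 1,-c,-d,cd\rangle$ represents $-c<_P 0$ directly. Your stated justification is also wrong in general. ``Density of $F$ in $F_P$'' fails, and orderings need not be archimedean. For example, $\mathbb{R}(t)$ ordered with $t$ infinitely large contains no element between $\sqrt{t}$ and $\sqrt{t}+1$. Your explicit choice $x=1+1/c$ does work in any ordered field, since $cx^2=1+(c+1+1/c)>_P 1$. So this is a wrong justification for a true and trivial step, not a gap.
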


\begin{proof}
This is a particular case of \cite[Proposition 2.2]{ElLa72}.
\end{proof}

\section{Fields with Finite Torsion in $I^2(F)$}

We start with the following

\begin{lemma}  \label{lem: FP}
Let $F$ be a formally real field, and let $\cA=(a,b)$, $\cB=(c,d)$ be quaternion division $F$-algebras satisfying condition~($*$). Then
$$
\cA_{F_P}\cong \cB_{F_P}
$$
for any $P \in  X(F)$.
\end{lemma}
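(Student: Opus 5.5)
Since $F_P$ is real closed, every quaternion $F_P$-algebra is isomorphic either to $M_2(F_P)$ or to $(-1,-1)_{F_P}$. It therefore suffices to show that $\cA_{F_P}$ is split if and only if $\cB_{F_P}$ is split. Over $F_P$ the algebra $(x,y)_{F_P}$ is nonsplit exactly when both $x$ and $y$ are negative at $P$, since the norm form $\langle 1,-x,-y,xy\rangle$ is anisotropic over the real closed field only in that case. So the claim reduces to the sign statement
$$a<_P 0 \text{ and } b<_P 0 \iff c<_P 0 \text{ and } d<_P 0 .$$

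By the symmetry of condition~($*$), it is enough to prove one implication. Assume $a<_P0$ and $b<_P0$, and suppose for contradiction that, say, $c>_P0$ (the case $d>_P0$ is identical). Condition~($*$) says $F(\sqrt c)$ splits $\cA$. If $c$ is a square in $F$, then $\cB=(c,d)$ is split, contradicting that $\cB$ is a division algebra. Hence $F(\sqrt c)$ is a genuine quadratic extension, and by the criterion recalled in Section~2, $c\in D(-n_\cA')=D(\langle a,b,-ab\rangle)$.

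Now pass to $F_P$. Since $a,b<0$ and $-ab<0$ at $P$, the form $\langle a,b,-ab\rangle$ is negative definite over $F_P$. So every value it represents over $F$ is negative at $P$, which contradicts $c>_P0$. Hence $c<_P0$, and likewise $d<_P0$. The converse follows by exchanging the roles of $\cA$ and $\cB$: we use that $F(\sqrt a)$ and $F(\sqrt b)$ split $\cB$, and that $a,b$ are nonsquares because $\cA$ is division.

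The only point requiring care is checking that $c$ and $d$ are not squares, so that the splitting criterion $d\in D(-n_\cA')$ from Section~2 applies. This is handled by the division hypothesis on $\cA$ and $\cB$. Everything else is elementary sign bookkeeping over the real closure.
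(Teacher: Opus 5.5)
Your proof is correct and follows the paper's argument essentially step for step. The paper likewise notes that $a,b<_P0$ and writes $c=ax_1^2+bx_2^2-abx_3^2$ because $F(\sqrt c)$ splits $\cA$; it concludes $c,d<_P0$ and finishes by symmetry, while your remark that $c$ and $d$ are non-squares, so that the splitting criterion applies, is a detail the paper leaves implicit.
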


\begin{proof}
Let $P \in X(F)$. Assume that ${\cA}_{F_P}\cong (-1,-1)$. Then $a$ and $b$ are negative with respect to $P$.
Since $F(\sqrt{c})$ splits the algebra $\cA$,
$$
c=ax_1^2+bx_2^2-abx_3^2
$$
for some $x_1, x_2, x_3 \in F$. Then $c$ is negative with respect to $P$.
Analogously, $d$ is negative with respect to $P$.
Hence  ${\cB}_{F_P}\cong (-1,-1)$ and ${\cA}_{F_P}\cong {\cB}_{F_P}$.

Similarly, if ${\cB}_{F_P}\cong (-1,-1)$, then ${\cA}_{F_P}\cong (-1,-1)$.
\end{proof}

\begin{theorem}  \label{th: I2(F)}
Let $F$ be a formally real field such that $I_t^2(F)$ is finite. Let $n := |I_t^2(F)|$.
Let $\cA=(a,b)$ be a quaternion division $F$-algebra. Then the number of quaternion division $F$-algebras
$\cB=(c,d)$ such that $\cA$ and $\cB$ satisfy condition~($*$) does not exceed $n$.
In particular, $|\mathbf{gen}(\cA)| \le n$.
\end{theorem}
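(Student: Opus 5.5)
We will use the Witt ring. To each quaternion division $F$-algebra $\cB$ attach its norm form $n_{\cB}$, a $2$-fold Pfister form, hence an element of $I^2(F)$. Then consider the difference
$$
n_{\cA} - n_{\cB} \in I^2(F).
$$

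\emph{Step 1: the difference is torsion.} Suppose $\cA$ and $\cB$ satisfy condition~($*$). By Lemma~\ref{lem: FP}, $\cA_{F_P}\cong \cB_{F_P}$ for every $P\in X(F)$. Hence $(n_{\cA})_{F_P}\cong (n_{\cB})_{F_P}$, so the image of $n_{\cA}-n_{\cB}$ in $W(F_P)$ vanishes for every ordering $P$. Pfister's local--global principle then gives $n_{\cA}-n_{\cB}\in W_t(F)$. Combined with the previous paragraph, $n_{\cA}-n_{\cB}\in I_t^2(F)$.

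\emph{Step 2: injectivity.} Define the map
$$
\cB \longmapsto n_{\cA}-n_{\cB} \in I_t^2(F)
$$
on isomorphism classes of quaternion division algebras $\cB$ satisfying ($*$) with $\cA$. If two such algebras $\cB_1,\cB_2$ have the same image, then $n_{\cB_1}=n_{\cB_2}$ in $W(F)$. Both forms are $4$-dimensional anisotropic forms, since $\cB_i$ is a division algebra. Equality of anisotropic forms in the Witt ring means isometry, so $n_{\cB_1}\cong n_{\cB_2}$, and therefore $\cB_1\cong\cB_2$. The map is thus injective, which bounds the number of such $\cB$ by $|I_t^2(F)|=n$.

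\emph{Step 3: the genus.} Let $[\cB]\in \mathbf{gen}(\cA)$. Then $\cB$ has the same maximal subfields as $\cA$, so $\cB$ is a quaternion division algebra, and it may be written as $\cB=(c,d)$. Since $F(\sqrt a)$ and $F(\sqrt b)$ are maximal subfields of $\cA$, they split $\cB$; symmetrically, $F(\sqrt c)$ and $F(\sqrt d)$ split $\cA$. So ($*$) holds, as already noted after its definition. Distinct Brauer classes of quaternion division algebras correspond to non-isomorphic algebras, so $|\mathbf{gen}(\cA)|\le n$.

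The only substantive point is Step 1. It is handled entirely by Lemma~\ref{lem: FP} together with Pfister's local--global principle, so I expect no real obstacle. The one detail to check is that $n_{\cA}-n_{\cB}$ lies in $I^2(F)$, which holds because $I^2(F)$ is an additive subgroup of $W(F)$.
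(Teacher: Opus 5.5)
Your proof is correct and follows the same route as the paper: Lemma~\ref{lem: FP}, then Pfister's local--global principle, then an injection into $I_t^2(F)$. The one difference is the injection itself. The paper writes $\cB\sim\cA\otimes_F\cC$ with $\cC$ a quaternion algebra (this uses the common quadratic subfield supplied by ($*$)) and counts the torsion Pfister forms $n_{\cC}$. You instead send $\cB$ to the Witt class $n_{\cA}-n_{\cB}$ and get injectivity from uniqueness of the anisotropic representative of a Witt class. Your version needs no linkage, so it in fact bounds by $n$ the number of quaternion division algebras isomorphic to $\cA$ over every real closure $F_P$.
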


\begin{proof}
Let $\cB$ be a quaternion division $F$-algebra such that $\cA$ and $\cB$ satisfy condition~($*$). Then $\cB \sim \cA \otimes_F \cC$ for
some quaternion $F$-algebra $\cC$.
By Lemma \ref{lem: FP}, ${\cA}_{F_P}\cong {\cB}_{F_P}$ for any $P \in X(F)$. Hence $\cC_{F_P}$ splits for any $P \in X(F)$.
Consequently, $n_{\mathcal{\cC}_{F_P}}$ is hyperbolic for any $P \in X(F)$.

Then by Pfister's local-global principle, the corresponding norm form
$n_{\cC}$ belongs to $I_t^2(F)$. Since, up to isometry,
there are at most $n$ torsion 2-fold Pfister forms over $F$,
the number of quaternion $F$-algebras split over $F_P$ for any $P \in X(F)$ does not exceed $n$.
Then the number of quaternion division $F$-algebras
$\cB$ such that $\cA$ and $\cB$ satisfy condition~($*$) does not exceed $n$.
\end{proof}

From the theorem we immediately obtain the following consequence.

\begin{corollary}  \label{cor: I2(F)}
Let $F$ be a formally real field such that the ideal $I^2(F)$ is torsion-free. Then the genus is trivial
for any quaternion division $F$-algebra.
\end{corollary}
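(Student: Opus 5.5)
The plan is to derive this directly from Theorem~\ref{th: I2(F)} with $n=1$. If $I^2(F)$ is torsion-free, then $I_t^2(F)=I^2(F)\cap W_t(F)=\{0\}$, so $I_t^2(F)$ is finite and $|I_t^2(F)|=1$. Theorem~\ref{th: I2(F)} then says that for any quaternion division $F$-algebra $\cA$, at most one quaternion division $F$-algebra $\cB$ satisfies condition~($*$) together with $\cA$. Since $\cB=\cA$ trivially satisfies ($*$), it is the only such algebra up to isomorphism.

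It remains to pass from condition~($*$) to the genus. Take $[\cD']\in\mathbf{gen}(\cA)$. Then $\cD'$ is a central division algebra with the same maximal subfields as $\cA$, and these are quadratic. So $\deg\cD'=2$ and $\cD'$ is a quaternion division algebra $(c,d)$. Each quadratic field $F(\sqrt{c})$, $F(\sqrt{d})$ is a maximal subfield of $\cD'$, hence of $\cA$, hence splits $\cA$; symmetrically, $F(\sqrt a)$ and $F(\sqrt b)$ split $\cD'$. Here we need $c,d,a,b$ to be nonsquares, which holds because the algebras are division algebras. Thus $\cA,\cD'$ satisfy ($*$), as the paper already remarks, so $\cD'\cong\cA$ and $|\mathbf{gen}(\cA)|=1$.

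There is no real obstacle. The only point to check is that $\cB=\cA$ is itself counted in Theorem~\ref{th: I2(F)}, so that "at most one" means "exactly $\cA$". That is clear, since $\cA$ is split by $F(\sqrt a)$ and $F(\sqrt b)$.

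As an alternative, one can argue directly. With the notation of the proof of Theorem~\ref{th: I2(F)}, $n_\cC\in I_t^2(F)=0$, so $\cC$ is split and $\cB\sim\cA$. Since both are division quaternion algebras, $\cB\cong\cA$.
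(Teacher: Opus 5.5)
Your argument is correct, and it is exactly the derivation the paper means when it says the corollary follows immediately from Theorem~\ref{th: I2(F)}: torsion-freeness gives $I_t^2(F)=0$, so $n=1$, and your checks that $\cA$ itself satisfies ($*$) and that membership in $\mathbf{gen}(\cA)$ forces ($*$) spell out what the paper leaves implicit. The proof the paper actually writes out is a second, alternative one via Lemma~\ref{lem:D(n)} (Elman--Lam): equal genera means $D(-n_{\cA}')=D(-n_{\cB}')$, hence $D(n_{\cA}')=D(n_{\cB}')$, and the lemma then gives $n_{\cA}\cong n_{\cB}$ directly, without real closures, the auxiliary algebra $\cC$, or Pfister's local--global principle.
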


We also present another proof of this corollary, which, in our opinion, is of independent interest.

\begin{proof}
Let $\cA$ and $\cB$ be quaternion division $F$-algebras.
Note that
$$
\mathbf{gen}(\cA)=\mathbf{gen}(\cB) \Leftrightarrow D(-n_{\cA}')=D(-n_{\cB}') \Leftrightarrow D(n_{\cA}')=D(n_{\cB}').
$$

If $\mathbf{gen}(\cA)=\mathbf{gen}(\cB)$, by Lemma \ref{lem:D(n)}, $n_{\cA} \cong n_{\cB}$. Hence $\cA \cong \cB$.
\end{proof}


\begin{remark} \label{rem: Classification Theorem}
The proof of Corollary \ref{cor: I2(F)} also can be obtained from Classification Theorem 2 in \cite{ElLa74}
which contains the characterization of fields with torsion-free $I^2(F)$.
The following statements are equivalent:

(1) Quadratic forms over $F$ are  classified by ``dim'', ``det'' and the total signature.

(2) $I^2(F)$ is torsion-free.

(3) If an $F$-quaternion algebra splits in every real closure of $F$, then it splits over $F$.

(4) If an $F$-quaternion algebra splits over some $F(\sqrt{w})$ where $w$ is totally positive,
then it splits over $F$.

(5) All binary forms $\langle 1, a \rangle$ represent all totally positive elements of $F$.

\end{remark}

\begin{remark} \label{rem: not real}
If the field $F$ is not formally real and $I^2(F)$ is torsion-free, then $I^2(F)=\{0\}$ because $W(F)$ is a torsion group. Consequently, there are no quaternion division algebras over $F$.
\end{remark}

Below, we present some fields over which the genus of any quaternion division algebra is trivial.

\begin{corollary} \label{cor: examples}
If $F$ is one of the following fields, then the genus of any quaternion division $F$-algebra is trivial

(1) a formally real pythagorean field;

(2) a formally real field such that $\Br(F)$ has no elements of order 4;

(3) a formally real field extension of transcendence degree $1$ over
a hereditarily pythagorean euclidean field;

(4) a formally real field $F$ satisfying property $A_2$.

\end{corollary}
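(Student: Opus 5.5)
The plan is to reduce each of the four cases to Corollary \ref{cor: I2(F)}. In every case we show that $I^2(F)$ is torsion-free, or at least that $I_t^2(F)$ contains no anisotropic $2$-fold Pfister form, which is all the proof of Theorem \ref{th: I2(F)} really uses. In that proof, the algebra $\cC$ with $\cB\sim\cA\otimes\cC$ has norm form $n_\cC$, a torsion $2$-fold Pfister form. If every such form is hyperbolic, then $\cC$ splits and $\cB\cong\cA$. So the working criterion is:
\begin{quote}
\emph{every torsion $2$-fold Pfister form over $F$ is hyperbolic}.
\end{quote}
This is exactly property $A_2$, so case (4) follows at once by this observation.

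Case (1). If $F$ is pythagorean and formally real, then $W(F)$ is torsion-free. Indeed, a torsion form has signature $0$ at every ordering. Over a pythagorean field, the standard fact (e.g.\ \cite{ElKaMe08}, or Lam's book) is that $W_t(F)$ is generated by forms $\langle 1,-s\rangle$ with $s$ a sum of squares. Here $s$ is a square, so these forms are hyperbolic. Hence $I_t^2(F)=0$ and Corollary \ref{cor: I2(F)} applies.

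Case (2). Suppose $\Br(F)$ has no elements of order $4$, and let $\sigma=\langle\langle x,y\rangle\rangle$ be a torsion $2$-fold Pfister form. Then $\sigma=n_\cC$ for $\cC=(x,y)$, and $\cC$ splits at every real closure $F_P$. We want to show $\cC$ splits. The approach is the standard one behind Remark \ref{rem: Classification Theorem} (3): a quaternion algebra split at all orderings is a sum of algebras of the form $(w,\cdot)$ with $w$ totally positive. Since totally positive elements are sums of squares, one can try to exhibit a class $\beta$ with $2\beta=[\cC]$. For example, for $w=1+t^2$ one has $(w,z)\sim 2\cdot[\text{cyclic algebra of degree }4]$, via the standard fact that $-1$ being a sum of two squares in $F(\sqrt{-1})$ lets $(1+t^2,z)$ lift to exponent $4$. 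Then no elements of order $4$ forces $2\beta=0$, so $\cC$ splits. This is the step I expect to be the main obstacle. The cleanest route is probably to cite Elman--Lam's results relating $I_t^2(F)$ to $2$-divisibility in $\Br(F)$, namely that a quaternion algebra split at all orderings lies in $2\Br(F)$. Failing that, I would prove directly that $(1+t^2,z)=2[(F(\sqrt{-1})/F,\dots)]$-type classes are divisible by $2$, and use that the torsion part of $I^2$ is generated by such symbols modulo $I^3$ (by Merkurjev's theorem, $I^2/I^3\cong{}_2\Br$).

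Case (3). Let $F$ have transcendence degree $1$ over a hereditarily pythagorean euclidean, hence hereditarily euclidean, field $k$. I would invoke the known result (Elman--Lam--Prestel / Becker) that such function fields satisfy $A_2$, or even have torsion-free $I^2$. Heuristically, $F(\sqrt{-1})$ has cohomological dimension $1$, so $2$-fold Pfister forms trivial at all orderings are hyperbolic. Then apply case (4). If the hereditarily euclidean hypothesis is only available as a reference, I would simply cite it, since everything reduces to Corollary \ref{cor: I2(F)}.
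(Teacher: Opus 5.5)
The gap is in case (2). Your reduction to the criterion ``every torsion $2$-fold Pfister form is hyperbolic'' is sound, and it makes (4) immediate, more directly than the paper's appeal to \cite[Section 35.A]{ElKaMe08}. Case (1) is fine, and (3), like the paper's version, rests on a citation; the paper uses \cite[Theorem 6.6]{BeDaGrMaZa24}.

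The fact you propose to cite in (2), that a quaternion algebra split at every real closure lies in $2\Br(F)$, is false in general. By \cite[Corollary 5.14]{LaLeTi93}, which the paper uses in Corollary \ref{cor: I3(F)}, $[\cC]\in 2\Br(F)$ if and only if $2n_\cC=\langle\langle -1\rangle\rangle\otimes n_\cC$ is hyperbolic. Torsion $2$-fold Pfister forms can have order $4$. For example, over $F=\R(x,y,z)$ the algebra $\cC=(1+x^2+y^2,z)$ splits at every ordering. Yet $\langle\langle -1,1+x^2+y^2,z\rangle\rangle$ is anisotropic, because $1+x^2+y^2$ is not a sum of two squares in $\R(x,y)$: the residue of $(-1,1+x^2+y^2)$ at the prime $1+x^2+y^2$ is $-1$, a non-square in $\R(x)(\sqrt{-1-x^2})$. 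Your cyclic-quartic argument only shows $(w,z)\in 2\Br(F)$ when $w$ is a sum of \emph{two} squares, and totally positive elements need not be. The same example refutes your fallback that $I_t^2(F)$ is generated modulo $I^3(F)$ by such symbols: all of them lie in the subgroup $2\Br(F)\cap{}_2\Br(F)$, and $[\cC]$ does not.

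The missing idea is to use the hypothesis first to force $p(F)\le 2$.
\begin{itemize}
\item No elements of order $4$ means ${}_2\Br(F)\cap 2\Br(F)=0$. So, by \cite[Corollary 5.14]{LaLeTi93}, a quaternion algebra $\cC$ is split as soon as $2n_\cC=0$.
\item Take $\cC=(-1,w)$ with $w$ a sum of three squares. Then $2n_\cC=\langle\langle -1,-1,w\rangle\rangle$ is hyperbolic, since $w\in D(\langle 1,1,1,1\rangle)$. Hence $(-1,w)$ splits, i.e.\ $w$ is a sum of two squares, and inductively $p(F)\le 2$.
\item By Artin--Schreier every totally positive $w$ is then a sum of two squares. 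So every $(w,z)$ lies in $2\Br(F)\cap{}_2\Br(F)=0$.
\item This is condition (4) of Remark \ref{rem: Classification Theorem}, and the Elman--Lam classification theorem gives that $I^2(F)$ is torsion-free. The paper obtains this by citing \cite[Proposition 3.3]{BeLe01}.
\end{itemize}

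Once (2) is repaired, your $\mathrm{cd}_2$ heuristic for (3) becomes a proof. For hereditarily euclidean $k$ one has $\mathrm{cd}_2(k(\sqrt{-1}))=0$, so $\mathrm{cd}_2(F(\sqrt{-1}))\le 1$. Hence $2\alpha=\cor\circ\res(\alpha)=0$ for every $2$-primary $\alpha\in\Br(F)$, and (3) reduces to (2).
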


\begin{proof}
(1)
A formally real field $F$ is pythagorean if and only if its Witt ring $W(F)$ is torsion-free (see \cite[Corollary 31.9]{ElKaMe08}).


(2) By \cite[Proposition 3.3]{BeLe01}, $I^2(F)$ is torsion-free. Note that in \cite{BeLe01} other notation is used, but by Pfister's local-global principle, they are equivalent.

(3) Since $F$ is euclidean,  $|F^*/{F^*}^2|=2$. Then by \cite[Theorem 6.6]{BeDaGrMaZa24}, $I^2(F)$ is torsion-free.

(4) By \cite[Section 35.A]{ElKaMe08}, $I^2(F)$ is torsion-free for a formally real field $F$ satisfying property $A_2$.
\end{proof}

\begin{example}
Let $F$ be a function field in one variable over any real closed field. Assume that $F$ is formally real. Then the genus of any quaternion division $F$-algebra is trivial. Indeed,
by Tsen's theorem (\cite[Theorem 6.2.8]{GiSz06}), any central simple $F$-algebra splits over the field $F(\sqrt{-1})$. Hence $\Br(F)$ has no elements of order 4.

The same result can be obtained using  \cite[Corollary 35.29]{ElKaMe08} which says that if $F$ is a finitely generated extension of transcendence degree $n$ over a real closed field,
then $I^{n+1}(F)$ is torsion-free.

The general case of a function field in one variable over any real closed field, including a non-real case, is considered in \cite[Corollary 3.7 (4)]{Sr25}.
\end{example}

\begin{corollary}  \label{cor: Pyth}
Let $F$ be a formally real field
and let $\cA=(a,b)$, $\cB=(c,d)$ be quaternion division $F$-algebras satisfying condition~($*$). Then
$$
\cA_{K}\cong \cB_{K}
$$
for any formally real field $K$ containing $F$ with torsion-free $I^2(K)$.
\end{corollary}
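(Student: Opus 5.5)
Condition~($*$) is preserved under field extension. If $F(\sqrt{c})$ splits $\cA$, then $c\in D_F(-n_{\cA}')$, so $c\in D_K(-n_{\cA_K}')$. Hence either $c\in {K^*}^2$ or $K(\sqrt c)$ splits $\cA_K$. In both cases $\cA_K\otimes_K K(\sqrt c)$ is split. The same holds for $d$, $a$ and $b$.

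The plan is to run the proof of Theorem~\ref{th: I2(F)} over $K$ instead of $F$, where now $I_t^2(K)=0$. One could try to invoke Corollary~\ref{cor: I2(F)} directly, but $\cA_K$ and $\cB_K$ need not be division algebras, and condition~($*$) over $K$ may degenerate. So I will argue through orderings rather than through genus.

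First, compare $\cA_K$ and $\cB_K$ at every real closure of $K$. Let $Q\in X(K)$ and let $P=Q\cap F$ be the restricted ordering of $F$. The real closure $K_Q$ contains a real closure of $F$ at $P$, so we may regard $F_P\subseteq K_Q$. By Lemma~\ref{lem: FP}, $\cA_{F_P}\cong\cB_{F_P}$, and extending scalars gives $\cA_{K_Q}\cong\cB_{K_Q}$. Alternatively, one can repeat the sign argument of Lemma~\ref{lem: FP} directly: $c=ax_1^2+bx_2^2-abx_3^2$ with $x_i\in F$, so $c$ is $Q$-negative whenever $a$ and $b$ are.

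Next, take the class $\cA_K\otimes_K\cB_K$ in $\Br(K)$. By Merkurjev's theorem, or more elementarily by the common-slot argument that a biquaternion class is Brauer equivalent to a quaternion algebra when both algebras become isomorphic, $\cA\otimes\cB\sim\cC$ for some quaternion $F$-algebra $\cC$. This is exactly what the proof of Theorem~\ref{th: I2(F)} uses, and I would simply cite the same fact here. Then $\cC_{K_Q}$ is split for every $Q\in X(K)$. So $n_{\cC_K}$ is hyperbolic at every real closure of $K$, and by Pfister's local–global principle $n_{\cC_K}\in I^2_t(K)=0$. Hence $\cC_K$ is split.

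Finally, $\cA_K\sim\cB_K$ follows from $\cC_K$ being split. Since $\cA_K$ and $\cB_K$ are both quaternion algebras, Brauer equivalence gives $\cA_K\cong\cB_K$ by dimension count (Wedderburn). The main obstacle is the existence of $\cC$, which is the same point used without comment in Theorem~\ref{th: I2(F)}. If that needs justification, I would avoid it entirely and argue with forms in $W(K)$: the element $n_{\cA_K}-n_{\cB_K}\in I^2(K)$ has zero signature everywhere by the previous step, so it is torsion, hence $0$. Then $n_{\cA_K}\cong n_{\cB_K}$ by Witt cancellation, which gives $\cA_K\cong\cB_K$. This second route is cleaner, and I would use it as the main argument.
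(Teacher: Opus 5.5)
Your main (second) route is correct, and it is genuinely different from the paper's proof.

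The paper stays over $F$. It takes the auxiliary quaternion algebra $\cC$ with $\cB\sim\cA\otimes_F\cC$, and uses Lemma~\ref{lem: FP} with Pfister's principle over $F$ to get $n_{\cC}\in I_t^2(F)$. It then needs only the functorial fact that torsion in $W(F)$ maps to torsion in $W(K)$. So it never compares orderings of $K$ with those of $F$.

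You instead apply Pfister's principle over $K$ to $n_{\cA_K}-n_{\cB_K}\in I^2(K)$. For this you need each $Q\in X(K)$ to restrict to some $P\in X(F)$ with $F_P\hookrightarrow K_Q$, or equivalently that the sign argument of Lemma~\ref{lem: FP} runs verbatim with $K$-orderings. Both are fine, and Witt cancellation then finishes.

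Your route dispenses with $\cC$ altogether. It uses nothing about $\cA,\cB$ beyond agreement at all real closures, so it never needs $\cA\otimes_F\cB$ to have index $\le 2$. You could equally run it over $F$, getting $n_\cA-n_\cB\in I_t^2(F)$, and then push forward to $K$. The paper's route instead packages everything into a single torsion $2$-fold Pfister form. That form is what gets counted in Theorem~\ref{th: I2(F)} and reused in Corollary~\ref{cor: I3(F)} and Theorem~\ref{th: variety}.

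A correction to your side remark on the first route: Merkurjev's theorem does \emph{not} give that $\cA\otimes_F\cB$ is Brauer equivalent to a single quaternion algebra, since biquaternion division algebras exist. The correct reason, implicit in the paper, is that condition~($*$) provides a common slot. $F(\sqrt c)\subset\cB$ splits the division algebra $\cA$, hence embeds in it. So $\cA\cong(c,e)$ for some $e$, and $\cA\otimes_F\cB\sim(c,de)$.

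Your opening paragraph, that ($*$) persists over $K$, is never used and can be dropped.
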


\begin{proof}
As in the proof of Theorem \ref{th: I2(F)}, we obtain that $\cB \sim \cA \otimes_F \cC$ for
some quaternion $F$-algebra $\cC$ and
$n_{\cC}$ belongs to $I_t^2(F)$. Then $n_{\cC_K}$ belongs to $I_t^2(K)$. Since $I^2(K)$ is torsion-free, $n_{\cC_K}$ is hyperbolic.
Hence $\cC_K$ is a matrix algebra. Thus $\cA_{K}\cong \cB_{K}$.
\end{proof}

\begin{corollary}  \label{cor: I3(F)}
Let $F$ be a formally real field such that the ideal $I^3(F)$ is torsion-free.
Let $\cA$ and $\cB$ be quaternion division $F$-algebras such that
$\mathbf{gen}(\cA) = \mathbf{gen}(\cB)$. Then $[\cA \otimes_F \cB] \in 2 \Br(F)$.
\end{corollary}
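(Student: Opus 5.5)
The plan is to reduce the statement to a claim about a single quaternion algebra. That algebra is split at every ordering, and the hypothesis on $I^3(F)$ pins it down further.

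\emph{Step 1: reduction to one quaternion algebra.} Assume $\mathbf{gen}(\cA)=\mathbf{gen}(\cB)$. Then $\cA$ and $\cB$ satisfy condition~($*$), so, as in the proof of Theorem~\ref{th: I2(F)}, we get $\cB\sim\cA\otimes_F\cC$ for some quaternion $F$-algebra $\cC=(x,y)$ with $n_\cC\in I_t^2(F)$. Since $\cA$ has exponent $2$, this gives $[\cA\otimes_F\cB]=[\cC]$. It therefore suffices to show $[\cC]\in 2\Br(F)$.

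\emph{Step 2: using that $I^3(F)$ is torsion-free.} The form $\langle\langle -1\rangle\rangle\otimes n_\cC=\langle\langle -1,x,y\rangle\rangle$ lies in $I^3(F)$. Its class equals $2\,n_\cC$ in $W(F)$, and $n_\cC$ is torsion, so this class is torsion. Hence $\langle\langle -1,x,y\rangle\rangle$ is hyperbolic, i.e.\ $n_\cC$ becomes hyperbolic over $F(\sqrt{-1})$. So $F(\sqrt{-1})$ splits $\cC$, and therefore $\cC\cong(-1,e)$ for some $e\in F^*$. Moreover $\cC_{F_P}$ is split for every $P\in X(F)$, which forces $e>_P0$ for all $P$. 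Thus $e$ is totally positive, and $e$ is a sum of squares by Artin's theorem.

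\emph{Step 3 (main obstacle): showing $(-1,e)\in 2\Br(F)$ for every sum of squares $e$.} If $e$ is a sum of two squares, $(-1,e)$ is split and we are done. In general I would look for a central simple algebra $D$ of exponent dividing $4$ with $D\otimes D\sim(-1,e)$.

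The first attempt is a cyclic algebra of degree $4$ attached to a $\Z/4$-extension containing $F(\sqrt{-1})$ or $F(\sqrt{e})$. By Albert's criterion this requires certain elements to be sums of two squares, so I expect it to handle only part of the cases. To handle an arbitrary number of squares, I would try induction on the number $m$ of squares needed to write $e$. Write $e=u^2+e'$ with $e'$ a sum of $m-1$ squares. Then use $(-1,e)\cdot(-1,e')$ together with the relation $e/e'=1+(u^2/e')$ and the identity $(-1,1+t)=(-t,1+t)+(t,1+t)$ to trade $(-1,e)$ for $(-1,e')$ plus symbols that are visibly squares (or split).

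If this direct route stalls, the fallback is to exploit the hypothesis again: work over $F(\sqrt{-1})$ and use the corestriction. The aim there is to show that $[(-1,e)]$ is the corestriction of a class that is divisible by $2$ over $F(\sqrt{-1})$, and then deduce divisibility over $F$. This step, producing the square root of $(-1,e)$ in $\Br(F)$, is where I expect the real difficulty to lie.
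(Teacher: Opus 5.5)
\textbf{There is a genuine gap, in two places.} Your Step~1 and the first half of Step~2 are exactly what the paper does: $2n_{\cC}=\langle\langle -1,x,y\rangle\rangle$ is a torsion element of $I^3(F)$, hence $0$.

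The error is the ``i.e.'' that follows. Hyperbolicity of $\langle 1,1\rangle\otimes n_{\cC}$ says that $n_{\cC}$ is annihilated by $\langle 1,1\rangle$. Hyperbolicity of $(n_{\cC})_{F(\sqrt{-1})}$ says that $n_{\cC}\in\langle 1,1\rangle W(F)$. These are different conditions. Your inference uses only $n_{\cC}\in I^2_t(F)$ and torsion-freeness of $I^3(F)$, and it already fails for $F=\mathbb{Q}$ and $\cC=(2,5)$:
\begin{itemize}
\item since $2=1^2+1^2$, we have $\langle 1,1\rangle\cong\langle 2,2\rangle$, so $\langle\langle -1,2\rangle\rangle=0$ and $2n_{\cC}=0$;
\item $\cC$ is split at the real place;
\item but $\cC$ is ramified at $5$, which splits in $\mathbb{Q}(\sqrt{-1})$, so $\mathbb{Q}(\sqrt{-1})$ does not split $\cC$, and $\cC\not\cong(-1,e)$ for any $e$.
\end{itemize}
What $n_{\cC}\perp n_{\cC}=0$ actually gives is $-1\in D(n_{\cC})$, using that $D(n_{\cC})$ is a group when $n_{\cC}$ is anisotropic. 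So $-(1+t^2)\in D(n'_{\cC})$ for some $t\in F$, and hence $\cC\cong(1+t^2,z)$ for some $z$. The slot you control is a sum of two squares, not $-1$.

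Step~3 is then never proved. As you formulate it, for arbitrary sums of squares $e$ and without the hypothesis on $I^3(F)$, it is false. The paper uses the criterion \cite[Corollary 5.14]{LaLeTi93}: a quaternion algebra $Q$ lies in $2\Br(F)$ if and only if $2n_Q=0$ in $W(F)$. By this criterion, for formally real $F$, $(-1,e)\in 2\Br(F)$ holds if and only if $\langle\langle -1,-1,e\rangle\rangle=0$, i.e.\ if and only if $e$ is a sum of four squares. This fails for $e=x^2+7$ in $\mathbb{Q}(x)$. So no induction on the number of squares using only symbol identities can succeed. Under the $I^3$ hypothesis the claim does hold, but only via the same criterion.

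The missing ingredient is exactly this criterion; the paper applies it as soon as $2n_{\cC}=0$ is known. With the corrected Step~2 you can supply the needed direction yourself, using the tool you already mention:
\begin{itemize}
\item if $1+t^2$ is not a square, Albert's criterion embeds $F(\sqrt{1+t^2})$ in a cyclic quartic extension $L/F$ with generator $\sigma$;
\item then $(1+t^2,z)\sim(L/F,\sigma,z)^{\otimes 2}$, so $[\cA\otimes_F\cB]=[\cC]\in 2\Br(F)$;
\item if $n_{\cC}$ is isotropic or $1+t^2$ is a square, $\cC$ is split and there is nothing to prove.
\end{itemize}
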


\begin{proof}
As in the proof of Theorem \ref{th: I2(F)}, we conclude that $\cB \otimes_F \cA \sim \cC$ for
some quaternion $F$-algebra $\cC$ and
$n_{\cC}$ belongs to $I_t^2(F)$. Then
$$
2 n_{\cC} = \langle 1,1 \rangle \otimes n_{\cC} \in I_t^3(F)
$$
and the order of $n_{\cC}$ does not exceed $2$. In view of \cite[Corollary 5.14]{LaLeTi93},
$[\cC] \in 2\Br(F)$.
\end{proof}

\begin{remark}
Equivalent properties of quaternion algebras from $2 \Br(F)$ are presented in \cite[Corollary 5.14]{LaLeTi93}.
\end{remark}


\begin{corollary}  \label{cor: -1}
Let $F$ be a formally real field with $p(F)\le 2$.
Let $\cA$ be a quaternion division $F$-algebra such that $F(\sqrt{-1})$ splits $\cA$.
Then $\mathbf{gen}(\cA)$ is trivial.
\end{corollary}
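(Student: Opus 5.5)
The plan is to reduce both algebras to the form $(-1,\,\cdot\,)$ and then compare them ordering by ordering, as in Lemma \ref{lem: FP}. Pfister's local--global principle is not needed as such; the hypothesis $p(F)\le 2$ will replace it.

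First I use the splitting hypothesis. Since $F$ is formally real, $-1\notin {F^*}^2$, so $F(\sqrt{-1})$ is a quadratic extension. Because it splits the division algebra $\cA$, it is a maximal subfield of $\cA$. Hence $\cA\cong(-1,u)$ for some $u\in F^*$.

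Now take $\cB$ with $\mathbf{gen}(\cB)=\mathbf{gen}(\cA)$. Then $\cB$ has the same maximal subfields as $\cA$, so $F(\sqrt{-1})$ is also a maximal subfield of $\cB$. Thus $\cB\cong(-1,v)$ for some $v\in F^*$. It remains to show $(-1,u)\cong(-1,v)$, which is equivalent to $(-1,uv)$ being split.

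As noted after condition~($*$), $\cA$ and $\cB$ satisfy ($*$). So Lemma \ref{lem: FP} gives $\cA_{F_P}\cong\cB_{F_P}$ for every $P\in X(F)$. Over a real closed field, $(-1,w)$ is a division algebra exactly when $w$ is negative. Hence $u$ and $v$ have the same sign at every ordering $P$, and $uv$ is totally positive.

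By Artin's theorem, $uv$ is then a sum of squares in $F$. Since $p(F)\le 2$, we get $uv=x^2+y^2$ for some $x,y\in F$. This says $uv$ is a norm from $F(\sqrt{-1})$, or equivalently that $uv$ is represented by $\langle 1,1\rangle$, so the Pfister form $\langle\langle -1,uv\rangle\rangle=\langle 1,1,-uv,-uv\rangle$ is isotropic. Therefore $(-1,uv)$ is split, and $(-1,u)\cong(-1,u)\otimes(-1,uv)\sim(-1,v)$ in $\Br(F)$. Both algebras are quaternion division algebras, so $\cA\cong\cB$.

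No step here is a real obstacle. The only point needing care is the first one: one must use that $F(\sqrt{-1})$ is a genuine quadratic extension lying inside both $\cA$ and $\cB$, which is exactly where formal reality of $F$ and the equality of genera enter.
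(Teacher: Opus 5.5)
Your proof is correct and is essentially the paper's argument. The paper also uses Lemma~\ref{lem: FP} to see that the quaternion algebra $\cC\sim\cA\otimes_F\cB$ splits at every real closure, notes that $\cC\cong(-1,c)$ with $c$ totally positive because $F(\sqrt{-1})$ splits both algebras, and concludes via Artin--Schreier and $p(F)\le 2$ that $\cC$ is split; the only cosmetic difference is that you write $\cA\cong(-1,u)$ and $\cB\cong(-1,v)$ and identify $\cC$ explicitly as $(-1,uv)$.
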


\begin{proof}
Assume that $\cB$ is a quaternion division $F$-algebra and
$\mathbf{gen}(\cA) = \mathbf{gen}(\cB)$.
As in the proof of Theorem \ref{th: I2(F)}, we obtain that $\cB \otimes_F \cA \sim \cC$ for
some quaternion $F$-algebra $\cC$ such that
$\cC_{F_P}$ splits for any $P \in X(F)$.

Since $F(\sqrt{-1})$ splits $\cB$, $\cC$ is also split by $F(\sqrt{-1})$. Hence
$$
\cC \cong (-1,c)
$$
for some totally positive element $c\in F$.
By the Artin-Schreier theorem, $c$ is a sum of squares in $F$. Hence, $c$ is a sum of at most two squares, which implies that $\cC$ is a matrix algebra. Thus $\cA \cong \cB$.
\end{proof}

\section{Function Fields of Varieties}

Before formulating the main result of this section, we recall some notions.

Let $K$ be a field of characteristic $\ne$ 2. For a discrete valuation $v$ of $K$, $\overline{K}_v$ denotes its residue field.
The $2$-torsion of the Brauer group $\Br(K)$ is denoted by ${_2 \Br}(K)$. There is the residue map
$$
\rho_v : {_2 \Br}(K) \longrightarrow \textrm{Hom} (G_v, \Z/2\Z),
$$
where $G_v$ is the absolute Galois group of  $\overline{K}_v$ (see, e.g., \cite[\S 10]{Sa99}).
Note that for a quaternion $K$-algebra $(a, b)$,
$$
\rho_v([(a, b)]) = (-1)^{v(a)v(b)}\overline{a^{v(b)}b^{v(a)}} \in \overline{K}_v^*/{\overline{K}_v^*}^2 \cong \textrm{Hom} (G_v, \Z/2\Z)
$$
by the tame symbol formula. A quaternion $K$-algebra $\cA$ is said to be unramified at $v$, if $\rho([\cA])=1$.

Let $K$ be a finitely generated extension over some subfield $k$,
$V_K$ denotes the set of all rank one discrete valuations on $K$, which are trivial
on $k$. If $v\in V_K$, $\pi_v$ is a fixed uniformizer for $v$ and $k(v)$ is the residue field at $v$, then the corresponding second residue homomorphism
$$
\partial_{\pi_v} : W(K) \longrightarrow W(k(v))
$$
is defined as follows (see, e.g., \cite[Section 19.A]{ElKaMe08}). For $a=u \pi_v^i$, where $i$ is an integer and $u$ is a unit in the corresponding valuation ring,
$$
\partial_{\pi_v} (\langle a \rangle) = \left\{
\begin{array}{cl}
0 & i \mbox{ is even,}\\
\langle \overline{u} \rangle &  i \mbox{ is odd.}\\
\end{array}
\right.
$$
The unramified Witt group $W_{nr}(K/k)$ is defined as the kernel of the group homomorphism
$$
W(K) \stackrel{\oplus \partial_{\pi_v}}\longrightarrow  \bigoplus_{v \in V_K} W(k(v))
$$
(see \cite{CoSu98}). Let
$$
I_{nr}^2(K/k) := W_{nr}(K/k) \cap I^2(K).
$$

Let $X$ be a smooth geometrically integral variety over a field $k$ of characteristic $\ne$ 2 and let $k(X)$ denote its function field. Let also $X^{(1)}$ denote the set of codimension $1$ points of $X$.
Since $X$ is smooth, any element $x$ of $X^{(1)}$ defines a rank one discrete valuation $v_x$ on $k(X)$. Let $k(x)$ be the residue field of $v_x$.
If $x\in X^{(1)}$ and $\pi_x$ is a fixed uniformizer for $v_x$, then the corresponding second residue homomorphism is denoted by $\partial_{\pi_x}$. It is clear that
$$
W_{nr}(k(X)/k) \subset \textrm{ker} \left(W(k(X)) \stackrel{\oplus \partial_{\pi_x}}\longrightarrow  \bigoplus_{x \in X^{(1)}}W(k(x)) \right).
$$

\begin{theorem}  \label{th: variety}
Let $F$ be a formally real field which is a finitely generated extension over some subfield $k$.
Assume that the torsion of $I_{nr}^2(F/k)$ is finite. Let $n$ be the number of torsion elements in $I_{nr}^2(F/k)$. Then
$$
|\mathbf{gen}(\cA)| \le n
$$
for any quaternion division $F$-algebra $\cA$.
\end{theorem}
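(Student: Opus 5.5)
The plan is to imitate the proof of Theorem \ref{th: I2(F)}, adding an unramifiedness step. Fix $\cA=(a,b)$, and let $\cB=(c,d)$ be a quaternion division $F$-algebra with $\mathbf{gen}(\cA)=\mathbf{gen}(\cB)$. Then $\cA$ and $\cB$ satisfy condition~($*$). By Lemma \ref{lem: FP}, $\cA_{F_P}\cong\cB_{F_P}$ for all $P\in X(F)$. Write $\cB\sim\cA\otimes_F\cC$ with $\cC$ a quaternion algebra; this is possible because two quaternion algebras that share the common quadratic splitting field $F(\sqrt{c})$ have a tensor product similar to a quaternion algebra. As in the proof of Theorem \ref{th: I2(F)}, Pfister's local--global principle gives $n_\cC\in I_t^2(F)$. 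The map $\cB\mapsto n_\cC$ is injective on isomorphism classes, since $[\cB]=[\cA]+[\cC]$ and $n_\cC$ determines $\cC$. So it suffices to show that $n_\cC\in W_{nr}(F/k)$ for every such $\cB$. Then $n_\cC$ lies in the torsion of $I_{nr}^2(F/k)$, which has $n$ elements, and $|\mathbf{gen}(\cA)|\le n$ follows.

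The key step is showing that $\partial_{\pi_v}(n_\cC)=0$ for each $v\in V_F$. I would argue in two stages. First, I would show that $\cA$ and $\cB$ have the same ramification at $v$, that is, $\rho_v(\cA)=\rho_v(\cB)$, so that $\cC$ is unramified at $v$. Second, I would show that an unramified 2-fold Pfister form has zero second residue. For the second stage, one can use that $\cC$ unramified at $v$ lets us write $\cC\cong(u,w)$ with $u$ a unit and $w\in\{u',u'\pi_v\}$, where $u'$ is a unit and $\bar u$ is a square whenever $w$ involves $\pi_v$. Then $n_\cC=\<\<u,w\>\>$, and its second residue is either $0$ or a multiple of $\<\<\bar u\>\>$, which is hyperbolic. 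This is a routine tame-symbol computation via \cite[Section 19.A]{ElKaMe08}.

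The first stage is the main obstacle: equality of genera must force equal residues. My first attempt is the standard argument of \cite{ChRaRa13}, which passes to the completion $F_v$. Suppose $\rho_v(\cA)\ne\rho_v(\cB)$. Take a quadratic extension of $F_v$ that splits $\cA_{F_v}$ but not $\cB_{F_v}$. For instance, if $\cA$ ramifies with residue $\bar r$, take an unramified extension given by a residue field extension related to $\bar r$, or a ramified one, according to the case. Then approximate it by a quadratic extension $F(\sqrt{e})$ that splits $\cA$ globally, which is possible because $D(-n_\cA')$ is open in the $v$-adic topology and weak approximation applies. This extension splits $\cA$ but not $\cB_{F_v}$, hence not $\cB$, contradicting $\mathbf{gen}(\cA)=\mathbf{gen}(\cB)$.

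The delicate part is producing a local element $e\in D(-n_{\cA}')$ with $e\notin D(-n_\cB')$ over $F_v$ in each case: both ramified with different residues, one ramified and one not, and one split locally. This requires a short case analysis over the residue field $k(v)$, using that $\cB_{F_v}$ splits over $F_v(\sqrt{e})$ if and only if $e\in D_{F_v}(-n'_{\cB})$. If approximation is awkward, I would instead cite the corresponding lemma of \cite{ChRaRa13} or \cite{RaRa10}: algebras in the same genus have identical residues at every discrete valuation.
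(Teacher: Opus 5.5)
Your proposal is correct and follows the paper's proof. The paper likewise writes $\cB\sim\cA\otimes_F\cC$ with $n_\cC\in I_t^2(F)$, takes $\rho_v([\cA])=\rho_v([\cB])$ from \cite[Lemma 2.5]{ChRaRa13} (your fallback), and concludes $\partial_{\pi_v}(n_\cC)=0$ by the tame-symbol computation you spell out; if you instead run the completion/approximation argument yourself, you must allow the roles of $\cA$ and $\cB$ to be swapped (e.g.\ over $\R((t))$ one has $D(-n'_{(-1,-1)})\subsetneq D(-n'_{(-1,t)})$ although the residues differ), which is harmless since the genus hypothesis is symmetric.
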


\begin{proof}
Assume that $\cA$ and $\cB$ are quaternion division $F$-algebras and $[\cB] \in \mathbf{gen}(\cA)$.
As in the proof of Theorem \ref{th: I2(F)} we obtain that $\cB \sim \cA \otimes_F \cC$ for
some quaternion $F$-algebra $\cC$ and
$n_{\cC}$ belongs to $I_t^2(F)$.

Let $v \in V_F$. By \cite[Lemma 2.5]{ChRaRa13}, $\rho_{v}([\cA])= \rho_{v}([\cB])$. Hence
$$
\rho_{v}([\cC]) = \rho_{v}([\cA])  \rho_{v}([\cB]) =1
$$
and the algebra $\cC$ is unramified at $v$. Using tame symbol formula for $\rho_{v}$ and the definition of
$\partial_{\pi_v}$ one can show that $\partial_{\pi_v}(n_{\cC})=0$.
Then $n_{\cC}$ belongs to $W_{nr}(F/k)$ and hence to the torsion of $I_{nr}^2(F/k)$.
The number of quaternion division $F$-algebras
$\cC$ such that $n_{\cC}$ belongs to the torsion of $I_{nr}^2(F)$ does not exceed $n$.
Thus $|\mathbf{gen}(\cA)| \le n$.
\end{proof}

Below, we consider examples of fields satisfying the conditions of the theorem above.

\begin{example}  \label{example: surface}
Let $X$ be a smooth projective geometrically integral real surface such that $X(\R)\ne \varnothing$.
Let $W(X)$ denote the Witt group of $X$ (\cite[Chapter I]{Kn77}.
By \cite[Corollary 3.2]{Fe87}, $W(X)$ is isomorphic to the kernel of the homomorphism
$$
W(\R(X)) \stackrel{\oplus \partial_{\pi_x}}\longrightarrow  \bigoplus_{x \in X^{(1)}}W(\R(x)).
$$
Then $W_{nr}(\R(X)/\R) \subset W(X)$.

In view of \cite[Theorem 3.1]{Su90} (see also \cite[Theorem 48]{Ba05}), the torsion of $W(X)$ is finite. Hence, the torsion of
$I_{nr}^2(\R(X)/\R)$ is also finite. Thus any quaternion division $\R(X)$-algebra has finite genus by Theorem \ref{th: variety}.

Finally, we note that the field $\R(X)$ satisfies the conditions of the Corollary  \ref{cor: I3(F)}  since by \cite[Corollary 35.29]{ElKaMe08},  $I^3(\R(X))$ is torsion-free.
\end{example}

\begin{example}  \label{example: hyperbolic affine sphere}
Let $k$ be a formally real pythagorean field and $n=1$ or $n=2$.
Consider the hyperbolic affine $(2n-1)$-sphere $\Sigma^{2n-1}$ defined over $k$ by the equation
$$
\sum_{i=1}^{n}T_{i}S_{i}=1.
$$

Although the rationality of $\Sigma^{2n-1}$ together with Stability Theorem \cite[Theorem A]{RaRa10} and Corollary \ref{cor: examples} already yield that the genus
of any quaternion division $k(\Sigma^{2n-1})$-algebra is trivial, we establish this result independently via Theorem \ref{th: variety}.

In view of \cite[Corollary 92 and Corollary 95]{Ba05} (see also \cite{BaWa02}), the Witt group $W(\Sigma^{2n-1})$ of $\Sigma^{2n-1}$ is isomorphic to the kernel of the homomorphism
$$
W(k(\Sigma^{2n-1})) \stackrel{\oplus \partial_{\pi_x}}\longrightarrow  \bigoplus_{x \in {\Sigma^{2n-1}}^{(1)}}W(k(x)).
$$
Moreover, by \cite[Theorem 101]{Ba05},
$$
W(\Sigma^{2n-1}) = \left\{
\begin{array}{cl}
W(k) \oplus W(k) & \mbox{ if $n\equiv 1$ modulo 4,}\\
W(k) &  \mbox{ if $n\not\equiv 1$ modulo 4.}\\
\end{array}
\right.
$$

Since $k$ is formally real pythagorean, $W(k)$ is torsion-free by \cite[Corollary 31.9]{ElKaMe08}. Then $W(\Sigma^{2n-1})$ is torsion-free.
This implies that $I_{nr}^2(k(\Sigma^{2n-1})/k)$ is torsion-free. In view of Theorem \ref{th: variety}, the genus of any quaternion division $k(\Sigma^{2n-1})$-algebra is trivial.
\end{example}




\begin{thebibliography}{99}

\frenchspacing


\bibitem{Ba05}
P. Balmer, {\em Witt Groups}, in: Handbook of K-theory, Vol. 2, Springer-Verlag, Berlin-Heidelberg, 2005, pp. 539--576.

\bibitem{BaWa02}
P. Balmer and C. Walter, {\em A Gersten--Witt spectral sequence for regular schemes}, Ann. Sci. \'{E}cole Norm. Sup. (4)  {\bf 35} (2002), no. 1, 127-152.


\bibitem{BeDaGrMaZa24}
K. J. Becher, N. Daans, D. Grimm, G. Manzano-Flores and M. Zaninelli, {\em The Pythagoras number of a rational function field in two variables}, arXiv:2302.11425.

\bibitem{BeLe01}
G. Berhuy and D. B. Leep, {\em Divisible subgroups of Brauer groups and trace forms of central simple algebras}, Documenta Math. {\bf 6} (2001), 489--500.


\bibitem{ChRaRa13}  V. I. Chernousov, A. S. Rapinchuk,  I. A. Rapinchuk,
{\em The genus of a division algebra and the unramified Brauer group},
Bull. Math. Sci. {\bf 3} (2013), no. 2, 211--240.

\bibitem{ChRaRa15} V. I. Chernousov, A. S. Rapinchuk, I. A. Rapinchuk, {\em Division
algebras with the same maximal subfields},
Uspekhi Mat. Nauk {\bf 70} (2015), no. 1, 89--122 (in Russian). English translation:
Russian Math. Surveys {\bf 70} (2015), no. 1, 83--112.

\bibitem{ChRaRa16}
V. I. Chernousov, A. S. Rapinchuk, I. A. Rapinchuk, {\em On the size of the genus of a division algebra}, Proc. Steklov Inst. Math.  {\bf 292} (2016), no. 1, 63--93.

\bibitem{ChRaRa20}
V. I. Chernousov, A. S. Rapinchuk, I. A. Rapinchuk, {\em The finiteness of the genus of a finite-dimensional division algebra, and some generalizations}, Israel J. Math.  {\bf 236} (2020), no. 2, 747--799.

\bibitem{ChRaRa24}
V. I. Chernousov, A. S. Rapinchuk,  I. A. Rapinchuk, {\em Simple algebraic groups with the same maximal tori, weakly
commensurable Zariski-dense subgroups, and good reduction},  Adv. Math. {\bf 438} (2024), paper no. 109437.

\bibitem{CoSu98}
J.-L. Colliot-Th{\'e}l{\`e}ne and R. Sujatha, {\em Unramified Witt groups of real anisotropic quadrics}, Duke Math. J. {\bf 91} (1998), no. 2, 393--404.

\bibitem{ElKaMe08}
R. Elman, N. Karpenko and A. Merkurjev, {\em The algebraic and geometric theory of quadratic forms},
Amer. Math. Soc. Colloq. Publ., 56
American Mathematical Society, Providence, RI, 2008.

\bibitem{ElLa72}
R. Elman and T. Y. Lam, {\em Quadratic forms over formally real fields and pythagorean fields}, Amer. J. Math. {\bf 94} (1972), 1155--1194.

\bibitem{ElLa74}
R. Elman and T. Y. Lam, {\em Classification theorems for quadratic forms over fields}, Comment. Math. Helv. {\bf 49} (1974), 373--381.


\bibitem{Fe87}
E. Fern\'{a}ndez-Carmena, {\em On the injectivity of the map of the Witt group of a scheme into the Witt group of its function field}, Math. Ann. {\bf 277} (1987), no. 3, 453--468.

\bibitem{GaSa10}
S. Garibaldi, D. J. Saltman, {\em Quaternion algebras with the same subfields},
Quadratic forms, linear algebraic groups, and cohomology, 225--238, Dev. Math. 18,
Springer, New York, 2010.

\bibitem{GiSz06}
P. Gille and T. Szamuely, {\em Central simple algebras and Galois cohomology}, Cambridge Stud. Adv. Math., 101
Cambridge University Press, Cambridge, 2006.

\bibitem{Kn77}
M. Knebusch, {\em Symmetric bilinear forms over algebraic varieties}, Conference on Quadratic Forms - 1976 (Proc. Conf., Queen's Univ., Kingston, Ont., 1976), pp. 103--283
Queen's Papers in Pure and Appl. Math., No. 46
Queen's University, Kingston, ON, 1977.

\bibitem{KrLi24}
D. Krashen and M. Lieblich,
{\em Transcendental splitting fields of division algebras}, Amitsur Centennial Symposium, 191--202.
Contemp. Math., 800
Israel Math. Conf. Proc.
American Mathematical Society, [Providence], RI, 2024.

\bibitem{KrMaRaRoSa22}
D. Krashen, E. Matzri, A. S. Rapinchuk, L. H. Rowen and D. J. Saltman, {\em Division algebras with common subfields},
Manuscripta Math. {\bf 169} (2022), no. 1-2, 209--249.


\bibitem{LaLeTi93}
T. Y. Lam, D. B. Leep and J.-P. Tignol, {\em Biquaternion algebras and quartic extensions}, Inst. Hautes \'Etudes Sci. Publ. Math.  {\bf 77} (1993), 63--102.

\bibitem{Me14} J. S. Meyer, {\em Division algebras with infinite genus},
Bull. London Math. Soc. {\bf 46} (2014), no. 3, 463--468.

\bibitem{Pi82}
R. S. Pierce, {\em Associative algebras}, Stud. Hist. Modern Sci., 9
Grad. Texts in Math., 88
Springer-Verlag, New York-Berlin, 1982.

\bibitem{RaRa10}  A. S. Rapinchuk,  I. A. Rapinchuk,
{\em On division algebras having the same maximal subfields},
Manuscripta  Math. {\bf 132} (2010), no. 3-4, 273--293.

\bibitem{RaRa20}
A. S. Rapinchuk and I. A. Rapinchuk,  {\em Linear algebraic groups with good reduction}, Res. Math. Sci. {\bf 7} (2020), no. 3, paper no. 28.

\bibitem{Ra24}
I. A. Rapinchuk, {\em Algebraic groups with good reduction and the genus problem}, Amitsur Centennial Symposium, 271--281.
Contemp. Math., 800
Israel Math. Conf. Proc.
American Mathematical Society, [Providence], RI, 2024.

\bibitem{Sa99}
D. J. Saltman, {\em Lectures on Division Algebras}, CBMS Regional Conference Series in Mathematics, vol. 94, American Mathematical Society, Providence, RI, 1999.

\bibitem{Sr25}
S. Srimathy, {\em The genus of division algebras over discrete valued fields}, J. Th{\'e}or. Nombres Bordeaux {\bf 37} (2025), no. 2, 535--567.

\bibitem{Su90}
R. Sujatha, {\em Witt groups of real projective surfaces}, Math. Ann. {\bf 288} (1990), no. 1, 89--101.

\bibitem{Ti16}
S. V. Tikhonov, {\em Division algebras of prime degree with infinite genus},
Tr. Mat. Inst. Steklova {\bf 292} (2016),  264--267 (in Russian). English translation:
Proc. Steklov Inst. Math. {\bf 292} (2016), no. 1, 256--259.

\bibitem{Ti25}
S. V. Tikhonov, {\em Genus of division algebras over fields with infinite transcendence degree}, Arch. Math. (Basel) {\bf 125} (2025), no. 2, 115--121.



\end{thebibliography}
\end{document}